\documentclass[10pt]{amsart}


\usepackage{amsmath}
\usepackage{amsthm}
\usepackage{amsfonts}
\usepackage{amssymb,latexsym}
\usepackage[all]{xy}
\usepackage{graphicx}
\usepackage[mathscr]{eucal}
\usepackage{verbatim}
\usepackage{hyperref}

\addtolength{\textwidth}{24pt} \addtolength{\evensidemargin}{-12pt}
\addtolength{\oddsidemargin}{-12pt} \addtolength{\textheight}{15pt}

\raggedbottom

\theoremstyle{plain}
    \newtheorem{thm}{Theorem}[section]
    \newtheorem{prop}[thm]{Proposition}
    \newtheorem{lemma}[thm]{Lemma}

\theoremstyle{definition}

\theoremstyle{remark}

\numberwithin{equation}{section}



\begin{document}

\title{What is special about the Divisors of 12?}
\date{\today}

\author{Sunil K. Chebolu}
\address{Department of Mathematics \\
Illinois State University \\
Normal, IL 61790, USA} \email{schebol@ilstu.edu}

\author{Michael Mayers}
\address{Department of Mathematics \\
Illinois State University \\
Normal, IL 61790, USA} \email{mikemayers@gmail.com}

\maketitle

\thispagestyle{empty}
\section{Introduction}
This paper is a sequel to ``What is special about the divisors of 24?" \cite{24} in which the first author answered the following question which evolved out of a classroom discussion. For what values of $n$ does the multiplication table for $\mathbb{Z}_n$ have 1's only on the diagonal?  In other words,  when does $\mathbb{Z}_n$ have the property that whenever $ab =1$,  $a = b$.  It was shown that only the divisors of $24$ have this so-called diagonal property.   In fact, \cite{24} gives 5 proofs of this result.  In the last section of that paper, the following variation of this question was posed. For what values of $n$, does the multiplication table for $\mathbb{Z}_n[x]$ have 1's only on the diagonal? 
Note that this is a well-posed problem even though the multiplication table for  $\mathbb{Z}_n[x]$ has infinite size.  In fact, one can study this question over any ring $R$. Then the diagonal property for $R$ (the property of having 1's in the multiplication table for $R$ only on the diagonal, never off the diagonal) is equivalent to the following algebraic statement: every unit in $R$ is an involution. To see this, let $R$ be a ring with the diagonal property and let $a$ be a unit in $R$.  Then, by definition 
of a unit, there is an element $b$ in $R$ such that  $ab = 1$. Since $R$ has the diagonal property, $a = b$. This means $a^2 = 1$,  or equivalently, $a = a^{-1}$.  So $a$ is an involution.  For the other direction, suppose every unit in $R$ is an involution. If $ab = 1$,  then $a$ (being a unit) is also an involution. That is, $a^2=1$. Combining the last two equations, we get $ab=a^2$, and therefore $a = b$. This means $R$ has the diagonal property.

In this paper we will prove the following general result which gives a surprising answer to the above question.

\begin{thm}
For any positive integer $m$, the multiplication table for the polynomial ring $\mathbb{Z}_n[x_1, x_2, \cdots, x_m]$ has 1's only on the diagonal if and only if $n$ is a divisor of $12$. 
\end{thm}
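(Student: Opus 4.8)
The plan is to reduce the diagonal property for the polynomial ring to a purely arithmetic condition on $n$ by using the equivalent formulation already established in the introduction: $R$ has the diagonal property if and only if every unit of $R$ is an involution. So I would characterize exactly when every unit of $\mathbb{Z}_n[x_1,\dots,x_m]$ squares to $1$, and show this happens precisely when $n \mid 12$.

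First I would identify the units of $\mathbb{Z}_n[x_1,\dots,x_m]$. The key structural fact is that over a commutative ring $A$, a polynomial is a unit in $A[x]$ if and only if its constant term is a unit in $A$ and every other coefficient is nilpotent; iterating this over the several variables, a polynomial $f \in \mathbb{Z}_n[x_1,\dots,x_m]$ is a unit precisely when its constant term is a unit in $\mathbb{Z}_n$ and all of its non-constant coefficients are nilpotent. The nilpotent elements of $\mathbb{Z}_n$ are exactly the multiples of the radical $\operatorname{rad}(n) = \prod_{p \mid n} p$. So the whole problem splits into two independent demands: the units of $\mathbb{Z}_n$ itself must all be involutions (which by the prequel \cite{24} forces $n \mid 24$), and in addition every unit with a genuinely nonconstant nilpotent part must also be an involution.

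Next I would analyze the second demand directly. Writing a unit as $f = u + g$ where $u$ is a unit constant and $g$ has nilpotent coefficients and no constant term, the condition $f^2 = 1$ expands to $u^2 + 2ug + g^2 = 1$. Since $u^2 = 1$ is already forced by the constant-term analysis, this reduces to $2ug + g^2 = 0$, i.e. $g(2u + g) = 0$ in $\mathbb{Z}_n[x_1,\dots,x_m]$. The crux is then to decide for which $n$ this identity is forced to hold for every admissible $g$; testing $g = cx_1$ with $c$ nilpotent shows we need $2uc = 0$ and $c^2 = 0$ to be compatible in a way that holds for all nilpotent $c$ and all units $u$. Choosing the simplest nontrivial nilpotent $c = \operatorname{rad}(n)$ (or a suitable multiple) and $u = 1$, the requirement $2c = 0$ in $\mathbb{Z}_n$ becomes the decisive extra constraint. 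I expect this constraint, combined with $c^2 = 0$, to be exactly what cuts $24$ down to $12$: the prime $2$ can appear to the first power in $n$ but the extra factor of $2$ that $24$ carries over $12$ is precisely what creates a nilpotent $c$ with $2c \neq 0$.

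The main obstacle, and the step I would spend the most care on, is proving the converse cleanly, namely that for every divisor $n$ of $12$ the identity $g(2u+g)=0$ really does hold for all units $f=u+g$, not merely for the test polynomials. This amounts to showing that over $\mathbb{Z}_n$ with $n \mid 12$, any product of the form $c_ic_j$ of nilpotent coefficients vanishes and $2c = 0$ for every nilpotent $c$, so that all the cross terms in $g^2$ and in $2ug$ die simultaneously. Handling the cases $n \in \{1,2,3,4,6,12\}$ turns on the fact that for these $n$ the nilpotent elements form an ideal on which squaring is zero and multiplication by $2$ is zero, whereas for $n=8$ or $n=24$ one of these fails. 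I would organize the forward direction (that the diagonal property forces $n\mid 12$) by exhibiting explicit non-involutive units for each $n$ not dividing $12$, treating the $n\nmid 24$ case via \cite{24} and then ruling out $n=8$ and $n=24$ by the polynomial $f = 1 + \operatorname{rad}(n)\,x_1$, whose square is $1 + 2\operatorname{rad}(n)x_1 \neq 1$.
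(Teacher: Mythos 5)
Your proposal is correct and follows the same skeleton as the paper: the same translation of the diagonal property into ``every unit is an involution,'' the same characterization of units of $\mathbb{Z}_n[x_1,\dots,x_m]$ (unit constant term, nilpotent higher coefficients), the same subring reduction to $n \mid 24$, and the same counterexample units $1+2x_1$ and $1+6x_1$ for $n=8$ and $n=24$. Where you genuinely differ is the sufficiency direction: the paper argues case by case ($n=2,3,6$ are reduced, so the only units of the polynomial ring are the constants; $n=4$ and $n=12$ are settled by direct computation with the nilpotents $2$ and $6$), whereas you give one uniform argument, writing a unit as $f=u+g$ and reducing $f^2=1$ to $2ug+g^2=0$, which holds for all $n \mid 12$ because the nilradical $N$ of $\mathbb{Z}_n$ then satisfies $N^2=0$ and $2N=0$. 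This unification is a modest but real improvement: it isolates exactly the arithmetic feature separating $12$ from $24$ (namely $2\operatorname{rad}(n)\equiv 0$ and $\operatorname{rad}(n)^2\equiv 0 \pmod{n}$) instead of leaving it implicit in two parallel computations, and it answers the title question more conceptually. One slip to correct in your final paragraph: the square of $f=1+\operatorname{rad}(n)\,x_1$ is $1+2\operatorname{rad}(n)x_1+\operatorname{rad}(n)^2x_1^2$, not $1+2\operatorname{rad}(n)x_1$, and the omitted quadratic term is nonzero modulo $8$ and modulo $24$; your conclusion nevertheless stands, since the linear coefficient $2\operatorname{rad}(n)$ (which equals $4$ mod $8$ and $12$ mod $24$) is already nonzero, so $f^2\neq 1$ in both rings regardless.
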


We will use basic facts from commutative algebra which can be found in any standard textbook including \cite{AM} and \cite{DummitFoote}.

\section{Proof of the main theorem}
Fix an aribitrary positive integer $m$ which will be the number of variables in our polynomial ring. We begin with a straightforward observation.
Suppose $n$ is a positive integer for which $\mathbb{Z}_n[x_1, x_2, \cdots, x_m]$ has the diagonal property. Since $\mathbb{Z}_n$ is a subring of $\mathbb{Z}_n[x_1, x_2, \cdots, x_m]$, it follows that $\mathbb{Z}_n$  also has the diagonal property.  So $n$ has to be a divisor of 24; see \cite{24}.
 
$8$ and $24$ are the only numbers which divide 24 but not 12. So our main theorem will follow once we prove the following statements.

\begin{enumerate}
\item[(a)] $\mathbb{Z}_8[x_1, x_2, \cdots, x_m]$ and $\mathbb{Z}_{24}[x_1, x_2, \cdots, x_m]$ do not have the diagonal property.
\item[(b)] $\mathbb{Z}_n[x_1, x_2, \cdots, x_m]$ has the diagonal property when $n$ is a divisor of 24 that is not 8 or 24.
\end{enumerate}

\noindent
\underline{Proof of (a):} By the above discussion, it is enough to find, in both rings, a unit that is not an involution. To this end, we will use the following lemma which is well-known.
 
\begin{lemma} Let $R$ be a commutative ring. If $u$ is a unit and $r$ is a nilpotent element in $R$, then $u + r$ is a unit. 
\end{lemma}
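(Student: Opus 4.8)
The plan is to prove the standard fact that adding a nilpotent to a unit yields a unit, using a geometric-series (telescoping) argument.

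First I would reduce to the case $u = 1$. Since $u$ is a unit, I can write $u + r = u(1 + u^{-1}r)$, and a product of units is a unit; so it suffices to show that $1 + s$ is a unit whenever $s = u^{-1}r$ is nilpotent. Here I would note that $s$ is indeed nilpotent: if $r^k = 0$, then because $R$ is commutative, $s^k = (u^{-1}r)^k = u^{-k}r^k = 0$.

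Next, with $s^k = 0$, I would exhibit an explicit inverse for $1 + s$ via the finite geometric series
\[
(1 + s)\bigl(1 - s + s^2 - \cdots + (-1)^{k-1} s^{k-1}\bigr) = 1 - (-s)^k = 1 - (-1)^k s^k = 1,
\]
the middle terms telescoping and the final term vanishing because $s^k = 0$. This shows $1 + s$ has a two-sided inverse (commutativity makes the one-sided computation suffice), so $1 + s$ is a unit, and hence so is $u + r$.

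There is no real obstacle here, as the lemma is elementary; the only point requiring a small amount of care is the bookkeeping of signs in the telescoping sum and the observation that commutativity is what lets $u^{-1}r$ inherit nilpotence and lets a single-sided inverse computation establish invertibility. I would present the geometric-series identity as the key step and keep the reduction to $u=1$ as a brief preliminary remark.
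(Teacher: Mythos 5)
Your proof is correct and takes essentially the same approach as the paper's: both invert $u+r$ by factoring out the unit and using the terminating geometric series in $u^{-1}r$, which is exactly the paper's formal inverse $u^{-1}\bigl(1 - r/u + (r/u)^2 + \cdots\bigr)$. The only difference is that you spell out the telescoping identity and the nilpotence of $u^{-1}r$, steps the paper compresses into ``an easy check shows.''
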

\noindent
\begin{proof}
Let $k$ be the unique integer such that $r^k \ne 0$ and $r^{k+1} = 0$. Then an easy check shows that the formal inverse of $u + r$ is given by 
 $u^{-1}(1  - r/u + (r/u)^2 + \cdots + (-1)^k (r/u)^k )$.
\end{proof}

\noindent Returning to proof of (a):
\vskip 2mm \noindent
$\mathbb{Z}_8[x_1, x_2, \cdots, x_m]$: $2x_1$ is a nilpotent element in this ring and therefore, by the above lemma, $u = 1 + 2x_1$ is a unit. However, 
\[ u^2 -1 = (1 + 2x_1)^2 - 1=  4x_1 + 4x_1^2 \]
is not a multiple of $8$ in $\mathbb{Z}[x_1, x_2, \cdots, x_m]$, and hence not zero in $\mathbb{Z}_8[x_1, x_2, \cdots, x_m]$. Therefore $u$ is not an involution.

\noindent
$\mathbb{Z}_{24}[x_1, x_2, \cdots, x_m]$: This is done similarly. $6x_1$ is a nilpotent element, and therefore $u = 1 + 6x_1$ is a unit. But 
\[u^2 -1 = (1 + 6x_1)^2 - 1 = 12x_1 + 36x_1^2\]
is not a multiple of 24 in $\mathbb{Z}[x_1, x_2, \cdots, x_m]$, and hence not zero in $\mathbb{Z}_{24}[x_1, x_2, \cdots, x_m]$. Therefore $u$ is not an involution. This completes proof of part (a).

\vskip 2mm
\noindent
\underline{Proof of (b):} Here we will use the following commutative algebra result which gives a characterisation of units in polynomial rings. Although this proposition is well-known, it is often stated only in the one-variable case.

\begin{prop} Let $R$ be a commutative ring. A polynomial $f(x_1, x_2, \cdots, x_m)$ is a unit in $R[x_1, x_2, \cdots, x_m]$ if and only if the constant term of $f$ is a unit in $R$ and all other coefficients of $f$ are nilpotent elements in $R$.
\end{prop}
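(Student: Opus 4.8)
The plan is to treat the two implications separately, deriving the easier ``if'' direction almost immediately from the lemma above and reserving the real effort for the ``only if'' direction.

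First I would dispose of the backward implication. Suppose the constant term $c$ of $f$ is a unit in $R$ and every other coefficient of $f$ is nilpotent. Write $f = c + g$, where $g$ collects all the non-constant monomials of $f$. Each such monomial has the form $a_\alpha x_1^{\alpha_1}\cdots x_m^{\alpha_m}$ with $a_\alpha$ nilpotent, and is therefore itself nilpotent; since the nilpotent elements of a commutative ring form an ideal, the finite sum $g$ is nilpotent. Thus $f = c + g$ is the sum of a unit and a nilpotent element, so $f$ is a unit by the lemma above. Note that this argument is completely insensitive to the number of variables.

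For the forward implication I would argue through the prime spectrum rather than by a direct manipulation of coefficients. Evaluating at $x_1 = \cdots = x_m = 0$ is a ring homomorphism $R[x_1,\ldots,x_m] \to R$ sending $f$ to its constant term, and ring homomorphisms carry units to units, so the constant term of $f$ is a unit in $R$. For the nilpotency of the remaining coefficients, recall that the nilradical of $R$ is the intersection of all prime ideals of $R$. Fix a prime $\mathfrak{p}$ and reduce modulo $\mathfrak{p}$: this induces a map $R[x_1,\ldots,x_m] \to (R/\mathfrak{p})[x_1,\ldots,x_m]$ under which the image $\bar f$ of the unit $f$ is again a unit. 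Now $R/\mathfrak{p}$ is an integral domain, so $(R/\mathfrak{p})[x_1,\ldots,x_m]$ is a domain in which total degree is additive on products; comparing total degrees in $\bar f \, \bar f^{-1} = 1$ forces $\bar f$ to have total degree $0$, i.e.\ to be a constant. Hence every non-constant coefficient of $f$ lies in $\mathfrak{p}$. As $\mathfrak{p}$ was arbitrary, these coefficients lie in the intersection of all primes, namely the nilradical, and so are nilpotent.

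The main obstacle is exactly this forward nilpotency claim. Attacking it head-on in the one-variable case leads to the classical but fiddly induction establishing $a_n^{r+1} b_{s-r} = 0$ for the coefficients of $f$ and of its inverse, which one would then have to bootstrap to $m$ variables by regarding $R[x_1,\ldots,x_m]$ as a one-variable polynomial ring over $R[x_1,\ldots,x_{m-1}]$ and inducting on $m$. The prime-ideal reduction sketched above sidesteps that bookkeeping entirely and disposes of all $m$ at once, at the modest cost of invoking the standard fact that the nilradical is the intersection of the primes. I would therefore present the spectral argument, as it is the shortest route to the result.
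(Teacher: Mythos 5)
Your proof is correct and follows essentially the same route as the paper's: the evaluation homomorphism at $x_1 = \cdots = x_m = 0$ gives that the constant term is a unit, and reduction modulo an arbitrary prime $\mathfrak{p}$, the degree argument over the integral domain $R/\mathfrak{p}$, and the identification of the nilradical with the intersection of all primes give the nilpotency of the remaining coefficients. The only cosmetic difference is in the ``if'' direction, where you collapse all nilpotent monomials into a single nilpotent element (using that the nilpotents form an ideal) and apply the lemma once, whereas the paper applies the lemma inductively term by term; these amount to the same thing.
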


\begin{proof} The proof of the ``if direction"  follows easily from the above lemma and induction.
As for the ``only if direction," we first prove it in the special case when $R$ is an integral domain. To this end, suppose $R$ is an integral domain and let $f$ (as above) be a unit in $R[x_1, x_2, \cdots, x_m]$. Then there exists a polynomial $g$ such that $fg = 1$. Since $R$ is an integral domain, we have $\deg(fg) = \deg(f) + \deg(g)$.  So by comparing degrees we get 
\[ \deg(f) + \deg(g) = 0.\]
This means $f$ and $g$ are constant terms. Since $fg=1$, it follows that $f$ is a  unit in $R$.

For the general case,  let $f$ be a unit in $R[x_1, x_2, \cdots, x_m]$ and consider the ring homomorphism 
\[ R[x_1, x_2, \cdots, x_m]  \longrightarrow  R,  \]
which reduces modulo the ideal $(x_1, x_2, \cdots, x_m)$. The image of $f$ under this homomorphism is the constant term of $f$. Since every ring homomorphism sends units to units, we conclude that the constant term of $f$ is a unit.
To see that the other coefficients of $f$ are nilpotents in $R$,
we let $p$ be any prime ideal in $R$, and consider the ring homomorphism 
\[ R[x_1, x_2, \cdots, x_m]  \longrightarrow R/p[x_1, x_2, \cdots, x_m] \]
which reduces the coefficients of a polynomial modulo $p$.    Since $R/p$ is an integral domain, by the special case proved above, we conclude that the image of $f$ in $ R/p[x_1, x_2, \cdots, x_m]$  is a constant, i.e., the coefficients of all higher degree terms are zero. This means they all belong to $p$.  Since the choice of prime ideal $p$
was arbitrary, it follows that all  the coefficients of $f$, other than the constant term, belong to the intersection of all prime ideals. The latter  is precisely the nilradical of $R$ (see \cite{AM}), and therefore the coefficients in question are nilpotents.

\end{proof}

Returning to the proof of (b): we will show that whenever $n$ is divisor of 24 other than 8 or 24, every unit  $u$ in $\mathbb{Z}_n[x_1, x_2, \cdots, x_m]$ is an involution, i.e., $u^2 - 1 = 0$. The values of $n$ to be considered are $2, 3, 4, 6$ and 12. 

The rings $\mathbb{Z}_2$, $\mathbb{Z}_3$, and $\mathbb{Z}_6$ are reduced. That is, they do not have any non-zero nilpotent elements. Therefore, by the above proposition, the units in $\mathbb{Z}_2[x_1, x_2, \cdots, x_m]$, $\mathbb{Z}_3[x_1, x_2, \cdots, x_m]$, and $\mathbb{Z}_6[x_1, x_2, \cdots, x_m]$ are exactly those in $\mathbb{Z}_2$, $\mathbb{Z}_3$, and $\mathbb{Z}_6$ respectively. Since the rings $\mathbb{Z}_2$, $\mathbb{Z}_3$, and $\mathbb{Z}_6$  have the diagonal 
property, so do the corresponding polynomial rings. This leaves us with $4$ and $12$.

\noindent
$\mathbb{Z}_4[x_1, x_2, \cdots, x_m]$: $2$ is the only non-zero nilpotent and $1$ and $-1$ are the only units in $\mathbb{Z}_4$. Therefore every unit $u$ in $\mathbb{Z}_4[x_1, x_2, \cdots, x_m]$ is of the form $2h-1$ or $2h+1$, where $h$ is an arbitrary polynomial. Then $u^2 = 1 \mod{ 4}$  in $\mathbb{Z}[x_1, x_2, \cdots, x_m]$, and therefore $u$ is an involution in $\mathbb{Z}_4[x_1, x_2, \cdots, x_m]$.

\noindent
$\mathbb{Z}_{12}[x_1, x_2, \cdots, x_m]$: $6$ is the only non-zero nilpotent in $\mathbb{Z}_{12}$. Therefore every unit $u$ in $\mathbb{Z}_{12}[x_1, x_2, \cdots, x_m]$ is of the form $u = 6h + r$, where $h$ is an arbitrary polynomial and $r$ is a unit in $\mathbb{Z}_{12}$. 
Now, $u^2 = 36 h^2 + 12rh + r^2$. The latter is equal to $r^2$ in $\mathbb{Z}_{12}[x_1, x_2, \cdots, x_m]$. Since $\mathbb{Z}_{12}$ has the diagonal 
property, we have $r^2 = 1 \mod{12}$. Therefore $u$ is an involution in $\mathbb{Z}_{12}[x_1, x_2, \cdots, x_m]$, as desired.

\end{document}